\numberwithin{equation}{section}
\newtheorem{thm}{Theorem}[section]
\newtheorem{lem}[thm]{Lemma}
\newtheorem{conj}[thm]{Conjecture}
\newtheorem{rem}[thm]{Remark}
\theoremstyle{definition}
\newtheorem{defn}[thm]{Definition}
\theoremstyle{remark}
\newcommand{\R}{\mathbb{R}}
\newcommand{\Z}{\mathbb{Z}}
\newcommand{\PP}{\mathbb{P}}
\newcommand{\nm}[1]{{\,\left\|#1\right\|\,}}
\begin{document}

\title{On the Steinhaus tiling problem for  \protect{$\Z^3$}}

\author{Daniel Goldstein}
\address{Center for Communications Research, 4320 Westerra Court, San
  Diego, CA 92121}
\email{danielgolds@gmail.com}
%\thanks{}

\author{R. DANIEL MAULDIN}
%\address{Department of Mathematics\\Box 311430\\University of North Texas\\
%Denton, TX 76203}
\email{mauldin@unt.edu}
%\thanks{Research supported by NSF Grant DMS-0700831}
\thanks{The second-named author was supported by National Science Foundation 
DMS-0700831}
\date{\today}

\keywords{lattice points, Steinhaus problem}

\subjclass[2000]{Primary  28A20; Secondary 52A37, 11H31}

\begin{abstract} 
Steinhaus asked in the $1950$'s
whether there exists a set in $\R^2$
meeting every isometric copy of $\Z^2$ in precisely one point.
Such a ``Steinhaus set''
was constructed by Jackson and Mauldin. What about $\R^3$?
Is there a subset $S$ of $\mathbb{R}^3$ meeting every 
isometric copy of $\mathbb{Z}^3$ in exactly one point? 
We offer heuristic evidence that the answer is ``no''.

\end{abstract}

%\date{\today}      

\maketitle

\section{Introduction} \label{sec:intro}

Steinhaus asked in the late fifties~\cite[p. 193]{Si}
if there is a set $S \subseteq \R^2$ such that $S$ meets every
isometric copy of $\Z \times \Z$ in exactly one
point. We call such a set $S$ a Steinhaus set.
Equivalently, $S$ is a Steinhaus set if and only if 
$S$ is a simultaneous transversal
for each of the subgroups $R_{\theta}\Z^2$, where
$R_{\theta}=\left(\begin{smallmatrix} \cos\theta  & \sin\theta\\ 
-\sin\theta & \cos\theta\end{smallmatrix}\right)$.
Here a subset $E$ of an abelian group $G$ is a transversal for a
subgroup $H<G$ if $E$ meets every coset of $H$ in $G$ in exactly one point.
This problem was discussed by Croft in~\cite{C} and Croft, Falconer,
and Guy in ~\cite{CFG}, and by Jackson and Mauldin in~\cite{jm3}.

Jackson and Mauldin settled this question in~\cite{jm1} and~\cite{jm2}.

\begin{thm} \label{mainthm}
There is a set $S \subseteq \R^2$ that meets
every isometric copy $\Z \times \Z$ in exactly one point.
\end{thm}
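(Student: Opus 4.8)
The plan is to work with the equivalent formulation stated above: $S$ succeeds iff it is a simultaneous transversal for every subgroup $R_\theta\Z^2$, $\theta\in[0,2\pi)$. I would first convert the two halves of ``transversal'' into concrete geometric constraints. The \emph{at most one point per coset} half has a clean reformulation: two distinct points $x,y$ lie in a common coset of some $R_\theta\Z^2$ exactly when $R_\theta^{-1}(x-y)\in\Z^2$ for some $\theta$, i.e.\ when $\ab{x-y}^2=a^2+b^2$ for some $(a,b)\in\Z^2\setminus\{0\}$. Hence the negative constraint is simply that $S$ realize no distance in the countable set $D=\{\sqrt{a^2+b^2}:(a,b)\in\Z^2\setminus\{0\}\}$. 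The \emph{at least one point per coset} half says that for every $\theta$ the quotient map $\R^2\to\R^2/R_\theta\Z^2$ restricts to a surjection of $S$ onto the torus $\R^2/R_\theta\Z^2$; together with the negative constraint this upgrades to a bijection. A short verification shows that conditions (i) ``$S$ avoids every distance in $D$'' and (ii) ``$S$ meets every coset of every $R_\theta\Z^2$'' hold iff $S$ is a Steinhaus set.

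With this in hand I would attempt a transfinite recursion of length $\mathfrak{c}$. Well-order all isometric copies as $\{C_\alpha:\alpha<\mathfrak{c}\}$ and build an increasing chain $S_\alpha$ with $\ab{S_\alpha}\le\ab{\alpha}+\aleph_0$, maintaining the invariants that $S_\alpha$ realizes no distance in $D$ and that each $C_\beta$ with $\beta<\alpha$ already contains a point of $S_\alpha$. At stage $\alpha$, if $C_\alpha$ is not yet met, I would adjoin a point $p\in C_\alpha$ chosen to avoid the forbidden locus $\bigcup_{q\in S_\alpha}\{p:\ab{p-q}\in D\}$, a union of fewer than $\mathfrak{c}$ circles. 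Passing to the limit $S=\bigcup_{\alpha<\mathfrak{c}}S_\alpha$ would then produce a set meeting every copy in exactly one point.

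The hard part, and the reason this is a theorem rather than an exercise, is that this recursion does not obviously close. The candidate set $C_\alpha$ is only \emph{countable} (a coset of a lattice is a countable set), whereas the forbidden locus, although a union of fewer than $\mathfrak{c}$ circles, can meet $C_\alpha$ in infinitely many points; even under $\ch$ a single stage may have countably many forbidden circles, each deleting a point of the countable coset, so that every candidate in $C_\alpha$ is killed and the construction jams. Overcoming this is the crux: one must prove genuinely geometric and number-theoretic lemmas guaranteeing that circles centered at previously committed points can never exhaust an entire coset, where the relevant counts are governed by the representation arithmetic of sums of two squares. I would therefore not leave the committed points arbitrary, but build $S$ through finite approximations on progressively finer grids, controlling for each committed $q$ how many points of a given coset lie at a distance in $D$ from $q$, and interleave the positive requirements with bookkeeping that keeps the committed configuration suitably spread out. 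The decisive lemma to isolate is an extension/gluing property: every legal finite partial transversal for finitely many angles extends so as to meet one further coset while still avoiding $D$. Establishing this extension property, and assembling the finite pieces via a compactness (\emph{König's lemma}) limit, is where essentially all the difficulty lies; the surrounding transfinite recursion is then routine.
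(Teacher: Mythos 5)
The paper does not prove Theorem~\ref{mainthm} at all; it quotes it from \cite{jm1} and \cite{jm2}, where the proof occupies the bulk of a long JAMS article. So the relevant question is whether your proposal stands on its own, and it does not: it is an architecture, not a proof. Your opening reduction is correct and matches the standard setup --- two distinct points $x,y$ lie in a common coset of some $R_\theta\Z^2$ iff $\nm{x-y}^2$ is a sum of two integer squares, so the ``at most one'' half is exactly the avoidance of the distance set $D$, and the ``at least one'' half is the transversality requirement for each angle. But you then explicitly defer ``essentially all the difficulty'' to an unproved extension lemma (every legal finite partial transversal extends to meet one further coset while still avoiding $D$). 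That lemma \emph{is} the theorem. Proving it requires the genuinely hard content of \cite{jm2}: sharp number-theoretic control on how many points of a coset of one lattice lie at a distance in $D$ from a point of another (governed by the divisor-bound growth of the sum-of-two-squares representation function), plus an intricate combinatorial/marriage-type argument showing the forbidden circles cannot exhaust a coset. Naming the obstruction is not the same as overcoming it.

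There is also a second gap you pass over too quickly: even granting the finite extension lemma, the limit step is not a routine ``compactness (K\"onig's lemma)'' argument. There are continuum many lattices and each coset is countable, so the transfinite recursion must preserve the extendability invariant through limit stages of uncountable cofinality; Jackson and Mauldin handle this with a carefully designed hierarchy of partial Steinhaus sets and closure properties, not by a tree-compactness argument over finite approximations. As written, your recursion could jam exactly where you say it could --- a countable coset $C_\alpha$ meeting infinitely many forbidden circles --- and nothing in the proposal rules this out. So the proposal correctly identifies the shape of the known proof but contains no argument for either of its two essential steps.
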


The question of whether the same result holds for $\Z^3$ in $\R^3$
remains unsolved. It is known (see \cite{KW})
that there cannot be a Lebesgue
measurable Steinhaus set in $\R^3$. It is not known whether there can be
a Lebesgue measurable Steinhaus set in $\R^2.$ 
However, it is known that no Steinhaus set in $\R^2$ 
is a Borel set or even has the Baire property~\cite{jm3}.
The analogous questions for $\Z^n$ in $\R^n$ for
$n \geq 4$ have negative answers, as the following observation of
the first-named author shows. If $S \subseteq \R^n$
were a Steinhaus set for $\Z^n$ ($n \geq 4$), let
$x=(a_1,a_1,\dots,a_n) \in S \cap \Z^n$, and let
$y=(b_1+1/2,b_2+1/2,b_3+1/2,b_4+1/2,b_5,\dots,b_n) \in S \cap (\Z^n
+(1/2,1/2,1/2,1/2,0,\dots,0))$. Easily $\nm{x-y}^2 \in \Z$. Since every
non-negative integer is the sum of four squares, this show that the
distance between $x$
and $y$ is the same as a distance between two lattice points in  $\Z^n$.

We investigate here the three-dimensional version of one of the central
results of \cite{jm2}, which involves the idea of extending ``partial
Steinhaus sets'' to larger and larger families of lattices to obtain a
Steinhaus set. We give heuristic evidence that this procedure would
fail, and that partial Steinhaus sets in $\R^3$ don't exist even
for rather small families of lattices. 

Here is an outline of our heuristic. 
If a Steinhaus set exists, then,
a $p$-partial Steinhaus function (see Definition~\ref{defn:mpsf}) exists
for every odd prime $p$.
Let $X_p \subseteq \Z^3$ be the cube
of triples $(a,b,c)$ such that $0\le a,b,c<p$.
By definition, 
a $p$-partial Steinhaus function is a function $L$ from 
$X_p$ to itself satisfying condition $(+)$ of Lemma~\ref{lem:ad}.
Given any function $L$ from $X_p$ to itself we associate $(p+1)p^2$
functions $\pi$ from $GF(p)$ to itself (see Definition~\ref{defn:pi}).
By Theorem~\ref{thm:perms},
$L$ is a $p$-partial Steinhaus function if and only if each of
the $(p+1)p^2$ associated functions is a permutation. 

Here is the numerology.
The cube $X_p$ has $p^3$ points. There are thus $p^{3p^3}$ functions
from $X_p$ to itself. The probability that a random function 
from $\{0,1,\dots, p-1\}$ to itself is a permutation is $p!/p^p$.
If the $p^2(p+1)$ functions associated to $L$ 
were random and independently distributed (which they
are not), the expected number $N_p$ of $p$-partial Steinhaus functions
would be 
$$
N_p = p^{3p^3} \left(\frac{p!}{p^p}\right)^{(p+1)p^2}.
$$

The right-hand side above is less than one for $p=11$ (or for large
$p$ by Stirling's approximation).  However, if the right-hand side
above is less than one, our heuristic argument suggests that no
$p$-partial Steinhaus function exists which, if true, would imply that
no Steinhaus set exists in dimension~$3$.
 
In \cite{jm1} the following
stronger result was shown.

\begin{thm}
There is a set $S \subseteq \R^2$ satisfying:

\begin{enumerate}
\item
$S$ meets every isometric copy of $\Z \times \Z$ in exactly one point.
\item \label{p2s}
If $x,y \in S$ are distinct then $\nm{x-y}^2 \notin \Z$.
\end{enumerate}

\end{thm}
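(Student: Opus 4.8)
The plan is to use condition~$(2)$ to make the ``at most one point'' half of $(1)$ automatic, and then to build $S$ by transfinite recursion so that it is additionally met by every copy. First observe that if distinct $x,y\in S$ lay in a common isometric copy $R\Z^2+v$ (with $R$ a rotation), then $x-y\in R\Z^2$, so $\nm{x-y}^2=\nm{R^{-1}(x-y)}^2\in\Z$, contradicting $(2)$. Hence $(2)$ already forces $S$ to meet every copy in \emph{at most} one point, and the whole task reduces to constructing a set $S$ with pairwise non-integral squared distances that meets every copy in \emph{at least} one point.

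It is convenient to reformulate the ``at least once'' requirement. For a rotation $R$ let $\pi_R\colon\R^2\to\R^2/\Z^2$ be $\pi_R(x)=R^{-1}x\bmod\Z^2$. Then $S$ meets the copy $R\Z^2+v$ exactly when $\pi_R(v)\in\pi_R(S)$, so $S$ meets every copy at least once iff $\pi_R(S)=\R^2/\Z^2$ for every $R$ (and condition $(2)$ makes each $\pi_R$ injective on $S$, recovering ``exactly once''). I would therefore enumerate all pairs $(R,t)$ with $t\in\R^2/\Z^2$ as $\{(R_\alpha,t_\alpha):\alpha<\mathfrak c\}$ and build an increasing chain $(S_\alpha)_{\alpha<\mathfrak c}$ of partial sets: at stage $\alpha$, if some already-chosen point projects to $t_\alpha$ under $\pi_{R_\alpha}$, do nothing; otherwise adjoin one point $x$ of the coset $C_\alpha=\{x:\pi_{R_\alpha}(x)=t_\alpha\}$, a translate of the lattice $R_\alpha\Z^2$, chosen so that $\nm{x-y}^2\notin\Z$ for every $y$ chosen so far. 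Then $S=\bigcup_\alpha S_\alpha$ satisfies both conditions.

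The crux is the existence of an admissible $x\in C_\alpha$ at each stage. The points of $C_\alpha$ forbidden by a single earlier $y$ form $\bigcup_{k\ge0}\,\bigl(C_\alpha\cap\{x:\nm{x-y}^2=k\}\bigr)$, a countable union of intersections of the discrete set $C_\alpha$ with circles, each intersection finite by compactness. Writing $C_\alpha=R_\alpha s+R_\alpha\Z^2$ and putting $x=R_\alpha(s+n)$ with $w=s-R_\alpha^{-1}y$, the condition $\nm{x-y}^2\in\Z$ becomes $2\bra{n,w}+\nm{w}^2\in\Z$ for $n\in\Z^2$. When $1,2w_1,2w_2$ are linearly independent over $\Q$, the map $n\mapsto 2\bra{n,w}\bmod\Z$ is injective, so $y$ forbids at most one point of $C_\alpha$; the danger is the degenerate case, where a $\Q$-linear relation among $1,2w_1,2w_2$ makes the forbidden condition invariant along a rank-one sublattice, so that $y$ can forbid an entire coset and such cosets might together exhaust $C_\alpha$.

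Controlling this degeneracy is the main obstacle, and it is where the real work lies. Since $C_\alpha$ is only countable while $\alpha$ may be as large as $\mathfrak c$, one cannot win by cardinality alone: the recursion must be arranged so that the accumulated forbidden set never covers $C_\alpha$. The approach I would attempt is to carry an additional \emph{general-position} invariant through the recursion — roughly, that the coordinates of the chosen points, together with the rotation data already handled, avoid the exceptional $\Q$-linear relations above — and to choose each new point not merely admissible but generic enough to preserve the invariant, while still ensuring that every pair $(R_\alpha,t_\alpha)$ is eventually met. Verifying that such a choice is always available is the delicate combinatorial–geometric heart of the construction, and it is precisely the two-dimensional ingredient whose three-dimensional analogue the numerology of this paper suggests must fail.
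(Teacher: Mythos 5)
Your reduction is correct as far as it goes: condition (2) does force ``at most one point per copy,'' the reformulation via $\pi_R$ is right, and your computation showing that a previously chosen $y$ forbids exactly one point of $C_\alpha$ when $1,2w_1,2w_2$ are $\Q$-linearly independent is accurate. But the argument stops precisely where the theorem begins. The step you defer --- verifying that a ``general-position invariant'' can be maintained so that an admissible point always exists --- is the entire content of the result, and as sketched it cannot be carried out. First, the counting problem you identify is not repaired by genericity of individual points: at stage $\alpha$ there may be $|\alpha|\ge\aleph_0$ previously chosen points, each forbidding at least one point of the countable coset $C_\alpha$, so even in the best non-degenerate case the forbidden set can a priori exhaust $C_\alpha$. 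Second, a blanket general-position invariant is simply unavailable: $S$ must contain a point of $\Z^2+t$ for \emph{every} $t$, in particular for every rational $t$, so $S$ necessarily contains infinitely many points all of whose coordinates are rational. For such pairs $\nm{x-y}^2\in\Q$, and keeping these rational numbers out of $\Z$ is an arithmetic constraint on forced, highly non-generic points --- not a matter of avoiding a small exceptional set of parameters. Your own degenerate case ($2w_1,2w_2\in\Q$, where one $y$ can forbid an infinite subset of $C_\alpha$) is thus not an avoidable pathology but the typical situation for a large, unavoidable part of the construction.

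For comparison: this paper does not prove the statement at all; it quotes it from Jackson and Mauldin \cite{jm1,jm2}. Their proof is not a greedy point-by-point transfinite recursion against all previously chosen points. It is organized around extending \emph{partial Steinhaus sets} --- full transversals for finite, then larger and larger, families of lattices --- and its technical core is a finitary combinatorial extension theorem whose verification runs through exactly the kind of permutation conditions that this paper's maps $\pi^\lambda_x$ generalize to dimension three (see Theorem~\ref{thm:equiv} and the remark following it). That finitary extension machinery is the missing idea in your proposal; without it, what you have is a correct reduction of the problem together with an honest description of why the naive recursion does not close.
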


This theorem is stronger since all that was necessary for the 
Steinhaus problem in dimension~2  was that the 
squared distance in (2) not be the square of a lattice distance. Our functions $\pi$
arise from extending the use of similar functions in \cite{jm2} to
prove Theorem 1.2.

The authors thank the anonymous referee for comments that led to
improvements in the manuscript.

\section{Lattice distances} \label{sec:1}
In this paper, a \emph{lattice} in $\R^n$ is an isometric copy of $\Z^n$ in $\R^n$
By definition, an element
of $\{\nm{v}^2 \mid v\in \Z^n\}$ is a \emph{squared lattice distance}.
A squared lattice distance is a sum of $n$ squares of integers.

As already remarked, if $n\ge 4$, then every non-negative integer is a
squared lattice distance. 

The following useful lemma on squared lattice distances applies
in dimension $n=2$ or $3$.
Weil \cite[p. 292]{weil} attributes it to L.~Aubry~\cite{A}.
See math overflow~\cite{mo} for a nice discussion.

\begin{lem} \label{lem:3squares}
Let $n=2$ or $3$. If an integer $N$ is a sum of $n$ rational squares
then $N$ is a sum of $n$ integer squares (i.e.\ is a squared lattice distance).
\end{lem}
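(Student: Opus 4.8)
The plan is to prove the lemma by the classical descent argument of Aubry, later formalized by Davenport and Cassels. Write $Q(u)=\nm{u}^2=\sum_{i=1}^n u_i^2$ for the standard form, and suppose $N=Q(v)$ for some $v\in\Q^n$. Clearing denominators, write $v=x/t$ with $x\in\Z^n$ and $t$ a positive integer, chosen so that $t$ is as small as possible among all such representations; equivalently $Q(x)=Nt^2$ with $t$ minimal. It suffices to show $t=1$, for then $N=Q(x)$ exhibits $N$ as a sum of $n$ integer squares.

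First I would set up the descent. Suppose $t>1$, and round $x/t$ to a nearest lattice point, i.e.\ pick $y\in\Z^n$ with $|x_i/t-y_i|\le 1/2$ for each $i$. If $x/t=y$ then $N=Q(y)$ already and we are done, so assume $x/t\neq y$. The geometric heart of the argument is that the line through the nearby integer point $y$ and the rational point $x/t$ meets the sphere $Q=N$ in a second rational point $x'/m$ whose denominator $m$ is strictly smaller than $t$. Concretely, one sets
$$
m=t\bigl(N+Q(y)\bigr)-2\sum_{i=1}^n x_i y_i,\qquad x'=m\,y+\bigl(Q(y)-N\bigr)(x-ty),
$$
and a direct computation gives $x'\in\Z^n$, $m\in\Z$, and $Q(x')=Nm^2$. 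Since $m=t\,Q(x/t-y)$, the bound $Q(x/t-y)<1$ forces $0<m<t$, contradicting the minimality of $t$. Hence $t=1$.

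The one ingredient that confines the lemma to $n\le 3$ — and which I expect to be the crux — is the strict inequality $Q(x/t-y)<1$. With the nearest-integer choice of $y$ one has $Q(x/t-y)=\sum_i(x_i/t-y_i)^2\le n/4$, and $n/4<1$ precisely when $n\le 3$. In other words, the covering radius $\tfrac{\sqrt n}{2}$ of $\Z^n$ is less than $1$ exactly for $n\le 3$, which is what makes the descent strict; for $n=4$ the deep hole $(\tfrac12,\dots,\tfrac12)$ gives equality and the argument breaks down (consistent with the fact that the conclusion cannot be obtained this way in higher dimensions). The remaining work is purely the verification that $m$ and $x'$ are integers with $Q(x')=Nm^2$, together with the bookkeeping that $m>0$ exactly when $x/t\neq y$; these are routine once the construction above is in place.
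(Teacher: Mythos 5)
Your proposal is correct and is essentially the paper's own argument: both are the Aubry descent via the nearest lattice point and the secant line through it, with the key inequality $n/4<1$ for $n\le 3$ driving the strict decrease of the denominator. You present it in the algebraic Davenport--Cassels form with explicit formulas for $m$ and $x'$, while the paper phrases the same descent geometrically (second intersection of the line $\overline{PZ}$ with the sphere), but the content is identical.
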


\begin{proof} We treat the more difficult case $n=3$. The case $n=2$
is treated similarly (and is not used in this paper).

Find a common denominator $m$ for the three  rational numbers so that
\begin{equation} \label{eq:lat}
N = \left(\frac{a}{m}\right)^2 + \left(\frac{b}{m}\right)^2 + \left(\frac{c}{m}\right)^2
\end{equation}
where $a,b,c$ and $m$ are integers.
We argue by contradiction. Assume that $N$ is \emph{not} the sum of three
squares of integers. Choose a solution to \eqref{eq:lat} with $m>0$ minimal.
By assumption $m>1$.

For the rest of the proof we work in three-dimensional Euclidean space.
The point $P = (\frac{a}{m},\frac{b}{m},\frac{c}{m})$ 
lies on the sphere $S$ centered at the origin $O$ 
with squared radius $N$ by \eqref{eq:lat}. 
Let $Z\in \Z^3$ be the nearest lattice point to $P$ 
(choose any if there is more than one). 
The observation
$$
0 < \nm{P-Z}^2 \le 3/4 <1.
$$
has two curious consequences. (1) the line
$\overline{PZ}$ 
is not tangent to $S$ (else $OPZ$ would be a right triangle in which
case $\nm{O-Z}^2 = \nm{O-P}^2 + \nm{P-Z}^2$. This is contradiction since 
the left hand side is an integer and the right hand side is not.)

It follows from (1) that the line $\overline{PZ}$ meets $S$ in two
points, $P$ and say $P'$. The second consequence is that $P'$ 
has rational coordinates
and yields a solution to \eqref{eq:lat} with denominator $m'$ strictly smaller than $m$. 
Indeed, we can write   
$$
P' = tP + (1-t)Z 
$$
for some real number $t$.
Then $\nm{P'}^2 = \nm{Z + t(P-Z)}^2 = \nm{Z}^2 + tZ\cdot(P-Z) + t^2\nm{P-Z}^2 .$ 
Since $P'$ and $Z$ both lie on $S$ their squared lengths are equal, so that 
$0 = tZ\cdot(P-Z) + t^2\nm{P-Z}^2,$ whence 
$$
t = -  Z\cdot(P-Z) / \nm{P-Z}^2.
$$
From this we see that the point $m\nm{Z-P}^2P'$ has integer coordinates.
But $m\nm{Z-P}^2$ is a positive integer strictly less than $m$. This 
contradiction proves the lemma.
\end{proof}

\section{Partial Steinhaus sets} \label{sec:2}
We begin this section with the definition of $m$-partial Steinhaus sets
and some elementary remarks.
For $m >1$ an integer, set 
$$
X_m= \{ ( a,b,c )  \mid a,b,c \in \Z\quad\mathrm{and}\quad 0 \leq a,b,c <m \}.
$$

\begin{defn} \label{def:partial}
A subset $S\subseteq \R^3$ is an 
\emph{$m$-partial Steinhaus set} 
if (i) \mbox{$|S \cap (\frac1m{x} + \Z^3)|=1$} for all $x\in X_m$, and (ii) 
$\nm{x-z}^2$ is not a squared lattice distance for all distinct
$x,z\in S$.
\end{defn}

\begin{rem} \label{rem:1}

\begin{enumerate}
\item 
Note that by (i), $S$ is of the form $\{\frac1mx+L(x) \mid x \in
X_m\}$ for some function $L:X_m \to \Z^3.$
\item By a straightforward calculation, if $L(x)$ furnishes an $m$-partial Steinhaus set
then so also does $L'(x) = L(x) -  m\gamma(x)$ for any function 
$\gamma:X_m\to \Z^3$.
\item  If an $m$-partial Steinhaus set exists and 
$m'\mid m$, 
then an $m'$-partial Steinhaus set exists.

\end{enumerate}

\end{rem}

We use  repeatedly the fact that any vector $v\in\Z^3$ is 
uniquely written:
\begin{equation} \tag{*} \label{eq:vecreduce}
v = y(v) + m\epsilon(v)
\end{equation}
with $y(v)\in X_m$ and $\epsilon(v)\in \Z^3$.
For each coordinate $1\le i\le 3$, 
$\epsilon(v)_i$ is the quotient and $y(v)_i$ 
the remainder when $v_i$ is divided by $m.$

From this fact and Remark~\ref{rem:1}(2) we are free to replace 
$L(x)$ by $y(L(x))$. Thus if an $m$-Steinhaus set exists, we may (and do)
assume $L:X_m\to X_m$.

Since $\nm{L(x)-L(z)}^2$ is a squared lattice distance if it is an integer by 
Lemma~\ref{lem:3squares}, we have proved:

\begin{lem} \label{lem:ad} We have
\begin{enumerate}
\item
For $m >1$ be an integer, let $L:X_m\to\Z^3$ be any function.
The set  $\{\frac1mx + L(x) \mid x \in X_m\}$ is an $m$-partial
Steinhaus set if and only if 

\begin{equation} \tag{+}
\nm{\left(\frac1mz + L(z)\right) - \left(\frac1mx + L(x)\right)}^2 
\quad \text{ \ is not an integer} 
\end{equation}
for any two distinct elements $x$ and $z$ of $X_m$.
\item If so, then $\{\frac1mx + L(x) + mg(x) \mid x \in X_m \}$ 
is an $m$-partial Steinhaus set for any $g:X_m\to\Z^3$. In particular,
taking $g(x) =-\epsilon(L(x))$,  the set $\{\frac1mx + y(L(x))
\mid x \in X_m \}$ 
is an $m$-partial Steinhaus set.
\end{enumerate}
\end{lem}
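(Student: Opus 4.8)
The plan is to derive both parts directly from Definition~\ref{def:partial} and Lemma~\ref{lem:3squares}, handling the two defining conditions of an $m$-partial Steinhaus set in turn. For Part (1) I would first observe that condition (i) of Definition~\ref{def:partial} holds automatically for any $S = \{\frac{1}{m}x + L(x) \mid x \in X_m\}$: since $L(x) \in \Z^3$ the point $\frac{1}{m}x + L(x)$ lies in the coset $\frac{1}{m}x + \Z^3$, and $X_m$ is a complete set of representatives for $\frac{1}{m}\Z^3/\Z^3$, so if two such points share a coset then $\frac{1}{m}(x-z) \in \Z^3$, forcing $x = z$; hence $S \cap (\frac{1}{m}x + \Z^3) = \{\frac{1}{m}x + L(x)\}$ is a singleton for every $x \in X_m$. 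It then remains to identify condition (ii) with $(+)$. The difference of two distinct points of $S$ has rational coordinates, so its squared length is a sum of three rational squares, and by Lemma~\ref{lem:3squares} (case $n = 3$) such a number is a squared lattice distance exactly when it is an integer. Therefore condition (ii) is equivalent, for each pair of distinct $x, z \in X_m$, to the assertion that the corresponding squared distance is not an integer, which is precisely $(+)$; this yields both directions of Part (1).

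For Part (2) I would put $L'(x) = L(x) + mg(x)$, note that $L' : X_m \to \Z^3$, and apply Part (1), so that it suffices to check that $L'$ inherits $(+)$ from $L$. With $w = (\frac{1}{m}z + L(z)) - (\frac{1}{m}x + L(x))$ and $u = m(g(z) - g(x))$, the new difference vector is $w + u$ and $\nm{w+u}^2 = \nm{w}^2 + 2\,w\cdot u + \nm{u}^2$. Here $w \in \frac{1}{m}\Z^3$ while $u \in m\Z^3$, so $w \cdot u \in \Z$ and $\nm{u}^2 \in \Z$; consequently $\nm{w+u}^2$ and $\nm{w}^2$ differ by an integer, and $\nm{w+u}^2$ is a non-integer exactly when $\nm{w}^2$ is. Thus $(+)$ for $L$ gives $(+)$ for $L'$, and the resulting set is again $m$-partial Steinhaus. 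The final ``in particular'' follows by taking $g(x) = -\epsilon(L(x))$: the decomposition \eqref{eq:vecreduce} gives $L(x) - m\epsilon(L(x)) = y(L(x)) \in X_m$.

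The whole argument is essentially bookkeeping, so I do not expect a serious obstacle; the one point deserving care is the factor $m$ in Part (2). Replacing $L$ by $L + g$ rather than $L + mg$ would leave the cross term $2\,w\cdot u$ only in $\frac{2}{m}\Z$, and $(+)$ need not survive; it is exactly the factor $m$ in $u$ cancelling the denominator $m$ in $w$ that keeps $w\cdot u$ integral. I would therefore flag this computation as the substantive step and treat everything else as immediate from the definitions together with the already-proved Lemma~\ref{lem:3squares}.
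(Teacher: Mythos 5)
Your proposal is correct and follows the same route as the paper: the paper derives Part (1) in one line from Lemma~\ref{lem:3squares} (an integer squared distance between rational points is automatically a squared lattice distance) and Part (2) from the "straightforward calculation" of Remark~\ref{rem:1}(2), which is exactly your cross-term computation. You have simply written out the details the paper leaves implicit, including the correct observation that the factor $m$ in $mg(x)$ is what keeps the cross term integral.
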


\begin{defn} \label{defn:mpsf}
We call a function $L:X_m\to X_m$ satisfying (+) an
$m$-partial Steinhaus function.
\end{defn}

We analyze whether there is such a function $L$ when $m$ is a prime
greater than~$2$. 

\begin{conj} \label{conj:p}
For $p$ a sufficiently large prime, there does not exist
a $p$-partial Steinhaus set in $\R^3$.
\end{conj}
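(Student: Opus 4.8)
The plan is to start from the combinatorial reformulation already in hand and then attack the resulting simultaneous-permutation problem by an over-counting argument. By Lemma~\ref{lem:ad} and Definition~\ref{defn:mpsf}, a $p$-partial Steinhaus set exists if and only if there is a function $L\colon X_p\to X_p$ satisfying $(+)$. Expanding $(+)$ for distinct $x,z\in X_p$, setting $u=z-x$ and $w=L(z)-L(x)$ (integer vectors with entries in $(-p,p)$) and clearing denominators, condition $(+)$ becomes
$$
\nm{u}^2 + 2p\,(u\cdot w)\not\equiv 0 \pmod{p^2}.
$$
When $\nm{u}^2\not\equiv 0\pmod p$ this holds automatically, so the only binding constraints come from pairs whose difference is isotropic modulo $p$, i.e.\ whose reduction lies on the conic $\{d: d_1^2+d_2^2+d_3^2\equiv 0\}$. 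These isotropic directions are precisely the $p+1$ projective points of a nondegenerate plane conic over $GF(p)$, and each direction foliates $X_p$ into $p^2$ lines of $p$ points each; this incidence structure is the source of the $(p+1)p^2$ functions $\pi$ of Definition~\ref{defn:pi}. By the characterization developed below relating $L$ to these associated permutations (Theorem~\ref{thm:perms}), it therefore suffices to prove that for $p$ large \emph{no} $L$ can make all $(p+1)p^2$ of these $\pi$ permutations of $GF(p)$ simultaneously.

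The strategy I would pursue is to upgrade the heuristic count into a rigorous \emph{upper} bound on the number of valid $L$. The number of free $GF(p)$-parameters is $3p^3$ (the coordinates of the values $L(x)$, which by Lemma~\ref{lem:ad}(2) matter only modulo $p$), whereas each permutation constraint costs a factor $p!/p^p=p^{-(1+o(1))\,p/\ln p}$ by Stirling; summed over the $(p+1)p^2$ constraints this is a total cost of order $p^4/\ln p$ in the exponent, which dwarfs the $3p^3$ parameters by a growing factor of order $p/\ln p$. Thus the system is not merely over-determined but massively so. Since the number of valid $L$ is a non-negative integer, any rigorous upper bound below $1$ would force it to be $0$ and settle the conjecture. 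To obtain such a bound I would process the lines one at a time, estimating the number of extensions of $L$ to a new line that keep the relevant $\pi$ a permutation, while tracking how much each step actually shrinks the count \emph{after} conditioning on the values already fixed on previously treated, intersecting lines; an entropy/submodularity bookkeeping (\`a la Shearer), with the $p+1$ lines through each point furnishing a natural fractional cover, would be used to show the per-constraint savings cannot be eroded by the overlaps by more than a bounded factor, keeping the product below $1$.

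An attractive alternative, yielding an exact obstruction rather than a count, is to exploit the power-sum identities that any permutation of $GF(p)$ must satisfy: $\sum_{t\in GF(p)}\pi(t)^k=\sum_{t\in GF(p)} t^k$ for every $k$, the right-hand side vanishing for $1\le k\le p-2$ and equalling $-1$ for $k=p-1$. Writing each $\pi_\ell$ explicitly in terms of the $L$-values along $\ell$ and summing these identities over the $p^2$ parallel lines in a fixed isotropic direction, then over the $p+1$ directions, produces a large coupled family of polynomial relations in the unknown coordinates of $L$. Because every point of $X_p$ lies on exactly one line in each direction, these relations are tightly linked across directions, and the aim would be to prove the resulting system inconsistent over $GF(p)$ for large $p$.

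The main obstacle --- and the reason this remains a conjecture --- is exactly the failure of independence that the numerology ignores. The $(p+1)p^2$ permutation events share the values of $L$: each $L(x)$ enters the $p+1$ functions $\pi_\ell$ indexed by the lines through $x$, one per isotropic direction. Ruling out a ``conspiracy'' in which correlations among these events manufacture a simultaneous solution is the crux; controlling these correlations --- whether through the incidence-geometric entropy bound of the second paragraph or by establishing the inconsistency of the power-sum system of the third --- is the step I expect to be genuinely hard, and is precisely where a rigorous argument must improve on the heuristic.
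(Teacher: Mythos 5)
The statement you are asked to prove is labelled a \emph{conjecture} in the paper, and the paper itself offers no proof of it --- only the heuristic count $M_p = p^{3p^3}\left(p!/p^p\right)^{(p+1)p^2}$, which is exactly the numerology you reproduce. Your set-up is faithful to the paper's: the reduction of $(+)$ to pairs whose difference is isotropic modulo $p$, the $p+1$ projective points on the conic $x^2+y^2+z^2\equiv 0$ (Lemma~\ref{lem:count}), the $p^2$ parallel lines in each isotropic direction, and the criterion that all $(p+1)p^2$ associated maps $\pi^\lambda_x$ be permutations (Theorem~\ref{thm:perms}) all match what the authors do. So as far as the reformulation goes you are on the same track as the paper.

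However, your proposal is not a proof, and you say so yourself. The entire content of the conjecture is the step you defer: converting the expected-value computation, which is valid only under a false independence assumption, into a rigorous upper bound on the number of admissible $L$ (or into an exact algebraic obstruction). Neither of your two suggested routes is carried out. The Shearer-type bookkeeping is only asserted to work: you give no argument that, after conditioning on the values of $L$ already fixed on intersecting lines, each new line still contributes close to the full $p!/p^p$ saving, and that is precisely where correlations could conspire. Any such argument must moreover be sensitive to the size of $p$, since the paper exhibits an actual $3$-partial Steinhaus set (so the count is positive at $p=3$, where $M_3\approx 1.4\times 10^{15}$) and leaves $N_5$ open. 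The power-sum system is likewise only proposed, with no analysis of its consistency. In short, your write-up is a correct restatement of the paper's heuristic together with a research plan; the genuine gap is exactly the one you name in your final paragraph --- controlling the dependence among the $(p+1)p^2$ permutation events --- and it is not closed here any more than it is in the paper.
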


The truth of Conjecture~\ref{conj:p} would imply that the Steinhaus
problem in $\R^3$ has a negative solution. 
We give heuristic evidence for Conjecture~\ref{conj:p} in
section~\ref{sec:3}.

We observe that, whatever $L$ is, 
if $\nm{(\frac1pz+ L(z))-(\frac1px + L(x))}^2$ is an integer, then 
we must have that $\nm{z-x}^2$ is divisible by $p$.

We make the following conventions.
Let $\Lambda\subseteq X_p$ consist of triples $(x,y,z) \ne (0,0,0)$ 
whose squared norm is
divisible by $p$ (and we note that Lemma~\ref{lem:count} counts the
number of such triples). For $\lambda \in \Lambda$, since 
$\nm{\lambda}^2$ is divisible by $p$, there is a unique 
$d=d(\lambda)$ in $\{0,...,p-1\}$ such that 
$\nm{\lambda}^2 \equiv dp \pmod{p^2}.$ 

\begin{defn} \label{defn:pi}
Let  $L:X_p\to X_p$ be any function. 
Set $F(x) = \frac1px + L(x).$
For each
$\lambda \in \Lambda$ and $x \in X_p$, we define a mapping 
$\pi^\lambda_x$ from
$\{0,\dots, p-1 \}$ to itself:
\begin{equation} \label {pxl}
\pi^\lambda_x(t) = \frac{td(\lambda)}{2} + \lambda\cdot
[L(y(x+\lambda t))-\epsilon(x+\lambda t)]\pmod p.
\end{equation}
\end{defn}

For convenience, if $\lambda$ and $x$ are fixed, we shorten the
notation 
to $y_t=y(x+\lambda t)$ and $\epsilon_t= \epsilon(x+\lambda t).$

Let $GF(p)$ denote the field of integers modulo $p$.
Throughout we identify the elements of $GF(p)$ with the set
$\{0,1,\cdots, p-1\}$.

\begin{thm} \label{thm:equiv}
For $p$ an odd prime and $L:X_p\to X_p$ any function,
the following three statements are equivalent:

\begin{enumerate}
\item For all $x,z \in X_p$ with $x\neq z$,
$$
\nm{F(z) -F(x)}^2 \notin \Z.
$$

\item For all  $\lambda \in \Lambda$, all $x\in X_p$,
and all distinct $s,t\in \{0,\cdots p-1\}$, 
$$
\nm{F(y_t) -F(y_s)}^2 \notin \Z.
$$

\item For all $\lambda \in \Lambda$ and for all $x\in X_p$,
the function  $\pi^\lambda_x$
is a permutation of $GF(p)$.
\end{enumerate}
\end{thm}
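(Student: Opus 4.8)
The plan is to establish the result by proving the two equivalences $(1)\Leftrightarrow(2)$ and $(2)\Leftrightarrow(3)$ separately. Throughout I would use the observation recorded just before Definition~\ref{defn:pi}: if $\nm{F(z)-F(x)}^2 \in \Z$ then $p \mid \nm{z-x}^2$. This lets me restrict attention, in every statement, to pairs of points whose difference has squared norm divisible by $p$, which are exactly the differences governed by the elements of $\Lambda$.

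For $(1)\Rightarrow(2)$ I would simply observe that $(2)$ asserts $(1)$ for the particular pairs $\{y_s,y_t\}$; the only thing to check is that these are genuinely distinct points of $X_p$ when $s\neq t$. Since $y_t-y_s\equiv \lambda(t-s)\pmod p$, while $\lambda$ has some coordinate that is nonzero modulo $p$ and $t-s$ is invertible modulo $p$, we get $y_s\neq y_t$, so $(2)$ is a bona fide special case of $(1)$. For $(2)\Rightarrow(1)$, take distinct $x,z\in X_p$; if $p\nmid\nm{z-x}^2$ then $\nm{F(z)-F(x)}^2\notin\Z$ automatically, so assume $p\mid\nm{z-x}^2$. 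Set $\lambda=y(z-x)$; reducing modulo $p$ shows $\nm{\lambda}^2\equiv\nm{z-x}^2\equiv0\pmod p$ and $\lambda\neq(0,0,0)$, so $\lambda\in\Lambda$. Starting the line at $x$ itself gives $y_0=y(x)=x$ and, since $z\equiv x+\lambda\pmod p$, also $y_1=y(x+\lambda)=z$. Thus the arbitrary pair $\{x,z\}$ equals $\{y_0,y_1\}$, and $(2)$ yields $\nm{F(z)-F(x)}^2\notin\Z$.

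The core is $(2)\Leftrightarrow(3)$, and I would prove the pointwise statement that, for fixed $\lambda\in\Lambda$, fixed $x$, and distinct $s,t$, the number $\nm{F(y_t)-F(y_s)}^2$ is an integer if and only if $\pi^\lambda_x(t)=\pi^\lambda_x(s)$. First I would substitute $y_t=(x+\lambda t)-p\epsilon_t$ into $F$; the term $x/p$ cancels in the difference, leaving $F(y_t)-F(y_s)=\tfrac1p\lambda(t-s)+(A_t-A_s)$ where $A_t=L(y_t)-\epsilon_t$. Expanding the squared norm and discarding the integer term $\nm{A_t-A_s}^2$ reduces integrality modulo $\Z$ to $\tfrac1{p^2}(t-s)^2\nm{\lambda}^2+\tfrac2p(t-s)\,\lambda\cdot(A_t-A_s)\in\Z$. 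Using $\nm{\lambda}^2\equiv d(\lambda)p\pmod{p^2}$ to replace the first term by $(t-s)^2 d(\lambda)/p$, clearing $p$, and factoring out the invertible $t-s$, this collapses to $(t-s)d(\lambda)+2\lambda\cdot(A_t-A_s)\equiv0\pmod p$, which is exactly $2\bigl(\pi^\lambda_x(t)-\pi^\lambda_x(s)\bigr)\equiv0\pmod p$. Since $2$ is invertible modulo $p$ (here is where oddness of $p$ enters), this is $\pi^\lambda_x(t)=\pi^\lambda_x(s)$, proving the pointwise claim. Consequently $(2)$ for a fixed $(\lambda,x)$ says precisely that $\pi^\lambda_x$ is injective, and an injective self-map of the finite set $GF(p)$ is a permutation; quantifying over all $\lambda\in\Lambda$ and $x\in X_p$ gives $(2)\Leftrightarrow(3)$.

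I expect the only real obstacle to be the bookkeeping in this last computation: keeping the reductions modulo $p$ and modulo $p^2$ straight, justifying that the $x/p$ contribution truly cancels, and confirming that factoring out $t-s$ and $2$ is legitimate because both are units modulo the prime $p$. Everything else is either a direct special-case observation or a short congruence argument.
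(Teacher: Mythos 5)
Your proposal is correct and follows essentially the same route as the paper: the equivalence of (1) and (2) via the observation that any offending pair $\{x,z\}$ arises as $\{y_0,y_1\}$ for some $\lambda\in\Lambda$, and the equivalence of (2) and (3) via the same expansion of $\nm{F(y_t)-F(y_s)}^2$ reducing to $2(t-s)\bigl(\pi^\lambda_x(t)-\pi^\lambda_x(s)\bigr)\not\equiv 0 \pmod p$. Your write-up is somewhat more explicit than the paper's (e.g.\ in verifying $y_s\neq y_t$ and in isolating the pointwise claim), but the substance is identical.
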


Note that the relationship between partial Steinhaus sets and
permutations was already exploited in~\cite{jm2}.

\begin{proof} 
For the equivalence of (1) and (2) it suffices to note that if
$x,z\in X_p$ are distinct and $\nm{F(z) -F(x)}^2 \in \Z $ then 
$z-x=  \lambda\pmod p$ for some $\lambda\in \Lambda,$ thus $z=y_1$ and $x = y_0$ in the notation
defined above.

Fix $\lambda \in \Lambda$ and $x\in X_p$. For $0 \leq s,t
  <p$ we have

\begin{align*}
&\nm{F(y_t)  -F(y_s)}^2   \notin \Z\\
\iff &\nm{\left(\frac{y_t}{p}+L(y_t)\right)-\left(\frac{y_s}{p} - L(y_s)\right)}^2   \notin \Z\\
\iff &\nm{\left(y_t - y_s\right) + p(L(y_t)-L(y_s)}^2 \notin p^2\Z \\ 
\iff& \nm{(t-s)\lambda +p[(L(y_t)-\epsilon_t)
   -(L(y_s)-\epsilon_s)]}^2\notin p^2\Z\\
\iff &(t-s)^2dp +2(t-s)p\lambda\cdot[(L(y_t)-\epsilon_t)-(L(y_s)-\epsilon_s)]
 \not\in p^2\Z\\
\iff &(t-s)^2d +2(t-s)\lambda\cdot[(L(y_t)-\epsilon_t)-(L(y_s)-\epsilon_s)]
\not\in p\Z\\
\iff &2(t-s) (\pi_x^\lambda(t) - \pi_x^\lambda(s)) \not\in p\Z.\\
\end{align*}

Since $p$ is odd, if $s\ne t$ then $2(t-s)$ is coprime to $p$, whence 

$$\nm{F(y_t)  -F(y_s)}^2 \notin \Z \iff 
\pi_x^\lambda(t)  \not\equiv \pi_x^\lambda(s) \pmod p.
$$

This proves the equivalence of (2) and (3).
\end{proof}

If there is a $p$-partial Steinhaus set in dimension~$3$, then
Theorem~\ref{thm:equiv} yields a family
of permutations of $GF(p)$ denoted 
$\pi_x^\lambda$, indexed by $x\in X_p$ and $\lambda\in \Lambda$. 
There are further conditions that this family must satisfy.

\begin{lem} \label{lem:conditions}
The permutations $\pi_x^\lambda$ satisfy:
\begin{enumerate}
\item 
For $a \in \{0,1,\cdots, p-1\}$, 
$$
\pi_x^\lambda(t+a)= \frac{ad}{2} + \pi_{x+a\lambda}^\lambda(t) \pmod p.
$$

\item 
For $\alpha \in GF(p)$, 
$$
\pi_x^\lambda(\alpha t)=\pi_{x}^{\alpha \lambda}(t) \pmod p
.$$
\end{enumerate}
\end{lem}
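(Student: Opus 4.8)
The plan is to verify both identities by direct substitution into the defining formula \eqref{pxl}, exploiting two structural features of that formula: the ``reduction data'' $y(x+\lambda t)$ and $\epsilon(x+\lambda t)$ depend only on the integer vector $x+\lambda t$, while the remaining ingredients, the leading term $\frac{td(\lambda)}{2}$ and the pairing $\lambda\cdot[\,\cdot\,]$, are visibly (bi)linear. A preliminary remark keeps the bookkeeping honest: although Definition~\ref{defn:pi} introduces $\pi_x^\lambda$ only for $x\in X_p$, formula \eqref{pxl} makes sense verbatim for every $x\in\Z^3$, and I will read the right-hand sides below with $x+a\lambda$ and $\alpha\lambda$ as genuine integer vectors. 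If one insists on representatives in $X_p$ (resp.\ $\Lambda$), then using \eqref{eq:vecreduce} one checks that replacing $x$ by $x+pw$ changes $\pi_x^\lambda$ by the explicit term $-\lambda\cdot w\pmod p$, since $y(x+\lambda t)$ is unchanged and $\epsilon(x+\lambda t)$ increases by $w$; these corrections are then tracked mechanically.

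For (1), I would write $x+\lambda(t+a)=(x+a\lambda)+\lambda t$, so that the argument of $y$ and $\epsilon$ in $\pi_x^\lambda(t+a)$ is identical to the one occurring in $\pi_{x+a\lambda}^\lambda(t)$. Consequently the entire pairing term of $\pi_x^\lambda(t+a)$ coincides with that of $\pi_{x+a\lambda}^\lambda(t)$, and the two functions differ only in their leading terms $\frac{(t+a)d(\lambda)}{2}$ versus $\frac{td(\lambda)}{2}$. Subtracting yields the constant $\frac{ad(\lambda)}{2}$, which is exactly the asserted relation. Once the integer-vector convention is fixed, this part is a one-line regrouping.

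For (2), I would substitute $\alpha\lambda$ for $\lambda$ in \eqref{pxl} and use $(\alpha\lambda)t=\lambda(\alpha t)$, so that $y(x+(\alpha\lambda)t)$ and $\epsilon(x+(\alpha\lambda)t)$ agree with the reduction data $y(x+\lambda(\alpha t))$ and $\epsilon(x+\lambda(\alpha t))$ appearing in $\pi_x^\lambda(\alpha t)$. Two ingredients then remain to be compared. First, the leading coefficient transforms as $d(\alpha\lambda)\equiv\alpha^2 d(\lambda)\pmod p$; this follows from $\nm{\alpha\lambda}^2=\alpha^2\nm{\lambda}^2$ together with the defining congruence $\nm{\lambda}^2\equiv d(\lambda)p\pmod{p^2}$. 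Second, the pairing scales linearly, $(\alpha\lambda)\cdot v=\alpha(\lambda\cdot v)$. Reconciling these two scalings against the factor produced by the substitution $t\mapsto\alpha t$ is precisely the crux, and it is the step where one must pin down the exact scalar relating $\pi_x^{\alpha\lambda}(t)$ and $\pi_x^\lambda(\alpha t)$.

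The main obstacle, then, is not conceptual but the non-invariance of both $\pi_x^\lambda$ and $d(\lambda)$ under changing the representatives of $x$ and of $\lambda$ modulo $p$: both quantities genuinely move, so the identities must be read with a fixed convention (integer vectors throughout, or explicit corrections via \eqref{eq:vecreduce}). In (2) in particular, one must take care that the coefficient scaling is governed by $d(\alpha\lambda)\equiv\alpha^2 d(\lambda)$ rather than $\alpha d(\lambda)$, and combine it correctly with the linearity of the pairing; this is the one place where a stray scalar factor can slip in. With the convention fixed, everything else is routine algebra.
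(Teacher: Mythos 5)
Your treatment of part (1) is fine and is essentially the paper's own (one-line) argument: plug $x+\lambda(t+a)=(x+a\lambda)+\lambda t$ into \eqref{eq:vecreduce}, observe the reduction data coincide, and read off the difference $\frac{ad}{2}$ of the leading terms. Your extra care about representatives (the correction $-\lambda\cdot w$ under $x\mapsto x+pw$, and the fact that \eqref{pxl} is well defined in $t$ modulo $p$ because $\nm{\lambda}^2\equiv 0\pmod p$) is a genuine improvement on the paper, which silently uses the integer-vector convention.

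Part (2), however, is not proved. You assemble exactly the right ingredients --- $d(\alpha\lambda)\equiv\alpha^2 d(\lambda)\pmod p$ and $(\alpha\lambda)\cdot v=\alpha(\lambda\cdot v)$ --- and then declare that ``reconciling these two scalings \ldots\ is precisely the crux,'' without performing the reconciliation. That is the entire content of part (2), so the proof stops at the decisive step. Moreover, if you do carry it out, the two scalings do \emph{not} cancel: with $x+(\alpha\lambda)t=x+\lambda(\alpha t)$ the bracketed terms agree, and
$$
\pi_x^{\alpha\lambda}(t)=\frac{t\,\alpha^2 d(\lambda)}{2}
+\alpha\Bigl(\lambda\cdot[L(y(x+\lambda\alpha t))-\epsilon(x+\lambda\alpha t)]\Bigr)
=\alpha\left(\frac{(\alpha t)\,d(\lambda)}{2}
+\lambda\cdot[\cdots]\right)
=\alpha\,\pi_x^\lambda(\alpha t)\pmod p,
$$
so the identity that actually follows is $\pi_x^{\alpha\lambda}(t)=\alpha\,\pi_x^\lambda(\alpha t)$, not the stated $\pi_x^{\alpha\lambda}(t)=\pi_x^\lambda(\alpha t)$. (The paper's own proof, which says part (2) ``follows similarly,'' glosses over the same factor; the discrepancy is harmless for the intended application, since postcomposing with multiplication by a nonzero $\alpha$ does not affect whether the map is a permutation, but the formula as written is off by that factor.) A complete proof attempt must either carry the computation through and record the scalar $\alpha$, or explain why it can be discarded; leaving the scalar ``to be pinned down'' is exactly the gap.
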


\begin{proof}
Part (1) follows from plugging
the identity $x+\lambda(t+a)= (x+a\lambda) + \lambda t$ into
\eqref{eq:vecreduce} and using the definition of $\pi$.
Part (2) follows similarly from the identity  $x+\lambda(\alpha t)= x
+(\alpha\lambda) t,$ 
noting that $\alpha\lambda\in \Lambda$ since $\lambda\in \Lambda$. 
\end{proof}

The questions arise as to how many permutations $\pi^\lambda_x$ are
needed and what conditions they must satisfy in order that a $p$-partial
Steinhaus function $L$ exists. In Theorem~3.10, we show that a
particular set of $(p+1)p^2$ permutations suffice. Towards showing this, we first note
the following. 

Let $\PP^2(GF(p))$ be the projective plane over $GF(p)$.

\begin{lem} \label{lem:count}
Let $p$ be a prime, $p>2$. Then 
\begin{enumerate}
\item There are exactly $p+1$ triples $(\alpha,
\beta,\gamma) \in \PP^2(GF(p))$ such that 
$\alpha^2 + \beta^2 +\gamma^2 \equiv 0.$
\item 
The set $\Lambda = \{(a,b,c) \mid 0 \leq a,b,c < p,
a^2+b^2+c^2 \equiv  0  \pmod p, (a,b,c) \ne(0,0,0)$ has cardinality
$|\Lambda| = p^2-1$.
\end{enumerate}
\end{lem}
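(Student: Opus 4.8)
The plan is to recognize the set in question as the $GF(p)$-points of the projective conic $Q\colon \alpha^2+\beta^2+\gamma^2\equiv 0$, and to exploit the classical fact that a smooth plane conic carrying a rational point is a copy of the projective line. First I would note that the two parts are a single count in disguise: if $(a,b,c)\neq(0,0,0)$ satisfies $a^2+b^2+c^2\equiv 0\pmod p$, then so does every nonzero scalar multiple $t\,(a,b,c)$ with $t\in GF(p)^*$, and these $p-1$ multiples represent one and the same point of $\PP^2(GF(p))$. Thus the natural map from $\Lambda$ to the set of projective solutions is exactly $(p-1)$-to-one, so $|\Lambda|$ equals $p-1$ times the number of projective solutions. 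It therefore suffices to prove part~(1), that there are $p+1$ projective solutions; part~(2) is then the arithmetic identity $|\Lambda|=(p-1)(p+1)=p^2-1$.

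For part~(1) the first step is to exhibit one rational point on $Q$. Since $p$ is odd, the image sets $\{a^2: a\in GF(p)\}$ and $\{-1-b^2: b\in GF(p)\}$ each contain $(p+1)/2$ elements of $GF(p)$, and as $(p+1)/2+(p+1)/2>p$ they must meet; a common value yields $a,b$ with $a^2+b^2\equiv -1$, i.e.\ a point $P=(a,b,1)$ on $Q$. The second step is to parametrize $Q$ by projection from $P$. The lines of $\PP^2(GF(p))$ through $P$ form a $\PP^1(GF(p))$, so there are $p+1$ of them. Because $\alpha^2+\beta^2+\gamma^2$ is a nondegenerate quadratic form in odd characteristic, $Q$ is smooth; restricting the form to a line through $P$, written $[\lambda P+\mu v]$, gives $\mu\bigl(2\lambda B(P,v)+\mu Q(v)\bigr)$, where $B$ is the associated bilinear form and we used $Q(P)=0$. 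Hence exactly one line, the tangent (where $B(P,v)=0$), meets $Q$ only at $P$, while each of the remaining $p$ lines meets $Q$ in $P$ together with one further point $[-Q(v):2B(P,v)]$. Matching the tangent to $P$ and each secant to its second intersection point gives a bijection between the $p+1$ lines through $P$ and the points of $Q$, proving part~(1).

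The step I expect to be the main obstacle is the clean verification that this projection really is a bijection: that every non-tangent line contributes exactly one additional point, that this point is again $GF(p)$-rational rather than living in a quadratic extension, and that distinct secant lines give distinct second points. The rationality is the crucial point, and it is what smoothness and $p$ odd buy us: restricting the form to a line yields a binary quadratic whose roots include the already-rational point $P$, so the second root is forced to be rational as well (the sum of the two roots lies in $GF(p)$). Distinctness then follows because $P$ together with any second point determines a unique line.

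Should one prefer to bypass the geometry, there is an entirely elementary alternative that proves part~(2) head-on. Write $N=\#\{(a,b,c)\in GF(p)^3: a^2+b^2+c^2\equiv 0\}=\sum_{c\in GF(p)} r_2(-c^2)$, where $r_2(t)=\#\{(a,b): a^2+b^2=t\}$. A short case split on whether $-1$ is a square modulo $p$ (equivalently on $p\bmod 4$) evaluates $r_2$: when $-1$ is a square the form factors as a product of two linear forms, giving $r_2(0)=2p-1$ and $r_2(t)=p-1$ for $t\neq 0$; when $-1$ is a nonsquare, $a^2+b^2$ is the norm form of $GF(p^2)/GF(p)$, giving $r_2(0)=1$ and $r_2(t)=p+1$ for $t\neq 0$. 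In both cases the sum collapses to $N=p^2$, so $|\Lambda|=N-1=p^2-1$, and dividing by $p-1$ recovers the count $p+1$ of part~(1).
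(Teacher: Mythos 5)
Your proof is correct and follows essentially the same route as the paper: exhibit a rational point on the conic by the pigeonhole argument, parametrize the remaining points by projection from that point onto the $p+1$ lines of a $\PP^1$, and then multiply by $p-1$ to pass from projective solutions to $\Lambda$. Your treatment of the tangent line and of the rationality of the second intersection point is somewhat more careful than the paper's, and the elementary $r_2$-counting alternative you append is a genuinely different (and self-contained) way to get part (2), but the core argument coincides.
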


\begin{proof}
Let $P=(\alpha,\beta,\gamma) \in \PP^2(GF(p))$ be a triple
satisfying $x^2 + y^2 +z^2 \equiv 0 \pmod p$.
Such a triple exists since the sets  $\{1+\beta^2 \mid \beta
\in GF(p)\}$ and $\{-\gamma^2: \gamma \in GF(p)\}$, each of cardinality 
$\frac{p+1}{2}$ have nonempty intersection by the pigeon-hole principle.

Note that the curve $C:x^2 + y^2 +z^2 \equiv 0 \pmod p$ contains $P$
and has degree $2$.
There is one line $\ell_t$ through $P$ for each slope $t\in\PP^1(GF(p))$.
Since $C$ has degree $2$, each line $\ell_t$
meets $C$ in two points: $P$ and say $Q_t$
(where $Q_t=P$ iff $\ell_t$ is tangent to $C$). 
Conversely, for each point $Q$ of $C$ 
there is a unique line meeting $C$ in $P$ and $Q$.
Thus, the set of points of $C$ is in bijection with 
the set of lines through $P$. There are $p+1$ possible slopes $t$
whence $p+1$ lines whence $p+1$ points on $C$.
The line through $P$ with slope $t, 0\le t<p$  meets $C$ in the point
$$
(\alpha t^2-2\beta t -\alpha, -\beta t^2 -2\alpha t + \beta,
\gamma(1+t^2)).
$$
The vertical line through $P$ meets $C$ in the point
$(\alpha,-\beta,\gamma).$
(Note also that $t=-\alpha/\beta$ is the slope of the tangent line and
we get back $P$ again.)

This is precisely the set of $p+1$ triples in $\PP^2(GF(p))$ 
that satisfy $x^2+y^2+z^2 = 0 \in GF(p)$.
This proves (i). 

For (ii), each projective triple
gives rise to
$p-1$ triples in $\Lambda$ for a total of $(p-1)(p+1)= p^2-1$ by part (i). 
\end{proof}

To state the next lemma, it is convenient
to choose a subset $W\subset \Lambda$ of cardinality $p+1$
representing each projective solution exactly once.
By the proof of Lemma~\ref{lem:count}, we have seen that we may take 
$$
W = \{(\alpha,-\beta,\gamma)\} \cup 
\{(\alpha t^2-2\beta t -\alpha, -\beta t^2 -2\alpha t + \beta,
\gamma(1+t^2))  \mid t\in GF(p)\}.
$$

Each $\lambda\in\Lambda$ determines by definition a one-dimensional
subspace $\ell_{\lambda}$ of
$GF(p)^3$. Let $C_{\lambda}$ be a complementary subspace to
$\ell_{\lambda}$.
Identifying $GF(p)$ with $\{0,1,\cdots, p-1\}$, we view $C_{\lambda}$ as 
a subset of $X_p$. Its cardinality is $p^2$.

\begin{thm} \label{thm:perms}
Let $p>2$ be a prime. Let $L:X_p\to X_p$ be any function.
Then the following are equivalent:

\begin{enumerate}

\item The set $\{\frac1px + L(x) \mid x \in X_p\}$ is a 
$p$-partial Steinhaus set.

\item 
For all $\lambda \in W$ and $x\in X_{\lambda}$ the map $
\pi^{\lambda}_x$ is a permutation.
\end{enumerate}
\end{thm}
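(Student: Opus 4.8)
The plan is to deduce everything from the characterization already proved in Theorem~\ref{thm:equiv}. By the equivalence of (1) and (3) there, statement (1) of the present theorem holds if and only if $\pi^\lambda_x$ is a permutation of $GF(p)$ for \emph{all} $\lambda \in \Lambda$ and \emph{all} $x \in X_p$. Since $W \subseteq \Lambda$ and $C_\lambda \subseteq X_p$, the direction (1)$\Rightarrow$(2) is immediate: the family indexed by $\lambda \in W$, $x \in C_\lambda$ is a subfamily of the full family. The whole content of the theorem is therefore the reverse implication, namely that the permutation property for the $(p+1)p^2$ maps $\pi^\lambda_x$ with $\lambda \in W$ and $x \in C_\lambda$ already forces it for every $\lambda \in \Lambda$ and every $x \in X_p$. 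I would establish this by propagating the permutation property in two independent steps, one for the $\lambda$ index and one for the $x$ index, each governed by one part of Lemma~\ref{lem:conditions}.

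First I would handle the $\lambda$ index using Lemma~\ref{lem:conditions}(2). Every $\lambda \in \Lambda$ lies on the same projective line as a unique representative $\mu \in W$, so $\lambda \equiv \alpha\mu \pmod p$ for some $\alpha \in GF(p)^\times$. Reading the identity $\pi^\mu_x(\alpha t) = \pi^{\alpha\mu}_x(t)$ shows that $\pi^\lambda_x = \pi^\mu_x \circ m_\alpha$, where $m_\alpha$ is multiplication by $\alpha$. As $\alpha \ne 0$, the map $m_\alpha$ is a bijection of $GF(p)$, so $\pi^\lambda_x$ is a permutation if and only if $\pi^\mu_x$ is. This reduces the claim to showing that $\pi^\mu_x$ is a permutation for all $\mu \in W$ and all $x \in X_p$.

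It remains to reduce the $x$ index from $X_p$ down to $C_\mu$, and for this I would use Lemma~\ref{lem:conditions}(1). Fixing $\mu \in W$, the translations $x \mapsto x + a\mu$ ($a \in GF(p)$) partition $X_p$ into the cosets of the line $\ell_\mu$, and by construction $C_\mu$ meets each such coset exactly once. Given $x \in X_p$, write $x \equiv x_0 + a\mu \pmod p$ with $x_0 \in C_\mu$. Rearranging Lemma~\ref{lem:conditions}(1) gives
\[
\pi^\mu_x(t) = \pi^\mu_{x_0}(t+a) - \tfrac{ad}{2} \pmod p,
\]
which exhibits $\pi^\mu_x$ as $\pi^\mu_{x_0}$ precomposed with the shift $t \mapsto t+a$ and postcomposed with subtraction of $\tfrac{ad}{2}$. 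Both operations are bijections of $GF(p)$, so $\pi^\mu_x$ is a permutation if and only if $\pi^\mu_{x_0}$ is; and $\pi^\mu_{x_0}$ is one by hypothesis, since $x_0 \in C_\mu$. Combining the two steps yields the permutation property for all $\lambda \in \Lambda$ and all $x \in X_p$, which by Theorem~\ref{thm:equiv} is statement (1).

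I expect the main obstacle to be bookkeeping rather than conceptual: one must check that $W$ is a genuine set of projective representatives (supplied by Lemma~\ref{lem:count} and the explicit parametrization of $C$) and that $C_\lambda$, as a complement to $\ell_\lambda$, really selects exactly one point from each $\ell_\lambda$-coset, so that the two propagation steps together sweep out all of $\Lambda \times X_p$ without gaps. Care is also needed that the reductions in $\lambda$ and in $x$ do not interfere; but since Lemma~\ref{lem:conditions}(2) fixes $x$ while varying $\lambda$, and part (1) fixes $\lambda$ while varying $x$, the two compose cleanly.
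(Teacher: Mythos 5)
Your proposal is correct and is exactly the argument the paper intends: its own proof is the one-line citation of Lemma~\ref{lem:ad}, Theorem~\ref{thm:equiv}, and Lemma~\ref{lem:conditions}, and you have simply filled in the details of how parts (1) and (2) of Lemma~\ref{lem:conditions} propagate the permutation property from the $(p+1)p^2$ maps indexed by $W$ and $C_\lambda$ to all of $\Lambda\times X_p$.
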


Note that (2) furnishes a collection of 
$(p+1)p^2$ maps from $GF(p)$ to $GF(p)$ that are
required to be permutations.

\begin{proof}
This follows from Lemma~\ref{lem:ad}, Theorem~\ref{thm:equiv}, 
Lemma~\ref{lem:conditions}. 
\end{proof} 
\section{Special values of~$p$} \label{sec:3}

In this section we study the number $N_p$ of $p$-partial Steinhaus
sets, where $p$ is a prime. We show that $N_3$ is positive,
and give a heuristic argument that $N_p$ is $0$ for $p\geq 11$. 
This heuristic argument, if it were in fact valid, would imply a
negative
solution to the Steinhaus problem in $\R^3$.

Recall the definition of a $p$-partial Steinhaus set.
Let $L$ be an arbitrary function from a $X_p$ to itself. 
From $L$ is constructed $(p+1)p^2$  subsidiary functions
$\pi^{\lambda}_x$ from $GF(p)$ to $GF(p)$, and $L$ determines a
$p$-partial Steinhaus set if and only if all of the 
subsidiary functions are permutations.

We show that there is a $3$-partial Steinhaus set (function) in dimension~$3$.
The function $L$ is determined by its $81=3\cdot 3^3$ coordinate
functions in $GF(3)$.
In fact, using Theorem~3.10 we find a $3$-partial Steinhaus set where each of the 
$36 = (3+1)3^2$ associated functions from $GF(3)$ to itself 
is an \emph{even} permutation (not just a permutation).
Note that a sufficient condition for a function $\pi:GF(3)\to GF(3)$ to be
an even permutation is 
$$\pi(1)=\pi(0)+1 \quad \rm{and}  \quad \pi(2)=\pi(1)+1.
$$
The condition that a given associated function is even is equivalent
to two linear conditions. 
It is easy (by computer!) to solve this.  The $36$ associated functions 
give rise to~$72$ equations in~$81$ variables. 
A computer calculation easily finds the following $3$-partial
Steinhaus set.
$$
\begin{array}{lll}
\left( 1, 2, 2 \right)&\left( 2, 2, 1/3 \right)&\left( 1, 2, 5/3 \right)\\
\left( 2, 1/3, 2 \right)&\left( 0, 1/3, 1/3 \right)&\left( 2, 1/3, 5/3 \right)\\
\left( 1, 5/3, 2 \right)&\left( 2, 5/3, 1/3 \right)&\left( 1, 5/3, 5/3 \right)\\
\left( 1/3, 2, 2 \right)&\left( 7/3, 2, 1/3 \right)&\left( 1/3, 2, 5/3 \right)\\
\left( 7/3, 1/3, 2 \right)&\left( 4/3, 1/3, 1/3 \right)&\left( 7/3, 1/3, 5/3 \right)\\
\left( 1/3, 5/3, 2 \right)&\left( 7/3, 5/3, 1/3 \right)&\left( 1/3, 5/3, 5/3 \right)\\
\left( 2/3, 0, 0 \right)&\left( 2/3, 0, 1/3 \right)&\left( 2/3, 0, 2/3 \right)\\
\left( 2/3, 1/3, 0 \right)&\left( 2/3, 1/3, 1/3 \right)&\left( 2/3, 1/3, 2/3 \right)\\
\left( 2/3, 2/3, 0 \right)&\left( 2/3, 2/3, 1/3 \right)&\left( 2/3, 2/3, 2/3 \right)\\
\end{array}
$$
What can one expect for other values of $p$? At the time of writing
we don't know whether $N_5>0$.

We give a heuristic argument. 
Let $L$ be a function from $X_p$ to itself.
Associated to any such $L$ are some functions $\pi^\lambda_x$ 
from $GF(p)$ to itself.
Then $L$ is a $p$-partial Steinhaus function if and only if 
each of the associated functions from  $GF(p)$ to itself is a 
permutation.

Since $X_p$ has cardinality $|X_p| = p^3$, there are 
$(p^3)^{p^3}$ functions $L$ from $X_p$ to itself.
The probability that a random function from $GF(p)$ to itself is a
permutation is  $p!/p^p$.

If the set of $(p+1)p^2$ functions $\pi^\lambda_x$  was chosen
uniformly from the collection of all  $(p+1)p^2$-tuples of functions
from $GF(p)$ to itself,
the expected value of
$N_p$ would be 
$$ 
M_p := p^{3p^3} \left(\frac{p!}{p^p}\right)^{(p+1)p^2}
$$

Here is a table  of $M_p$ for some small values of $p$.

$$
\begin{array}{r|l}
p & M_p\\
\hline
3 &1.4 \ E15\\
5 &5.8 \ E49\\
7 & 100 \\
11 & 1.1 \ E-1438\\
13 & 4.0 \ E-3748\\
\end{array}
$$

By Stirling's approximation, $\log(M_p) = -p^4 + 3.5p^3\log(p)\ +$
lower order terms. 
This heuristic reasoning offers some evidence that $N_p = 0$ for large $p$
(even $p\geq 11$). Thus, there would be no $p$-partial Steinhaus
function, and the Steinhaus problem in three dimensions would have a
negative solution.
See~\cite{hj} for some further work on the Steinhaus problem in two dimensions
by the NSF-RTG group at the University of North Texas.

\end{document}